\newtheorem{theorem}{Theorem}[section]
\newtheorem{proposition}[theorem]{Proposition}
\newtheorem{corollary}[theorem]{Corollary}
\newtheorem{lemma}[theorem]{Lemma}
\newtheorem*{conjecture*}{Conjecture}
\newtheorem*{maintheorem*}{Main Theorem}
\newtheorem*{modifiedconjecture*}{Modified Conjecture}
\theoremstyle{definition}
\theoremstyle{remark}
\newtheorem{remark}[theorem]{Remark}
\newtheorem{example}[theorem]{Example}
\begin{document}

\title[Diameter bound and the Plateau-Douglas problem]{A diameter bound for compact surfaces and the Plateau-Douglas problem}
\author{Tatsuya Miura}
\address{Tokyo Institute of Technology, 2-12-1 Ookayama, Meguro-ku, Tokyo 152-8551, Japan}
\email{miura@math.titech.ac.jp}
\keywords{Diameter, mean curvature, Topping conjecture, minimal surface, Plateau-Douglas problem, nonexistence theorem.}
\subjclass[2010]{49Q05, 53A10, 53C42}
\date{\today}

\begin{abstract}
  In this paper we give a geometric argument for bounding the diameter of a connected compact surface (with boundary) of arbitrary codimension in Euclidean space in terms of Topping's diameter bound for closed surfaces (without boundary).
  The obtained estimate is potentially optimal for minimal surfaces in the sense that optimality follows if the Topping conjecture holds true.
  Our result directly implies an explicit nonexistence criterion in the classical Plateau-Douglas problem.
  We exhibit examples of boundary contours to ensure that our criterion is of novel type compared with classical criteria based on the maximum principle and White's criterion based on a density estimate.
\end{abstract}

\maketitle


\section{Introduction}\label{sect:intro}

Let $M^2\hookrightarrow\mathbf{R}^n$ denote a compact two-dimensional surface immersed into Euclidean $n$-space with $n\geq3$, throughout this paper (unless specified).
A well-known conjecture of P.\ Topping \cite{Topping1998} predicts that, at least for $n=3$, among all connected closed surfaces $M$ (without boundary)
\begin{equation}\label{eq:Toppingconjecture}
  C_T(n) :=\inf_M \frac{1}{d(M)}\int_M|H| = \pi,
\end{equation}
where $d(M):=\max_{p,q\in M\hookrightarrow\mathbf{R}^n}|p-q|$ denotes the extrinsic diameter and $H$ denotes the (inward) mean curvature vector with the convention that $|H|\equiv1$ holds for a unit sphere.
An optimal shape is expected to be a long cylinder with capped ends, which directly implies that $C_T\leq\pi$.
On the other hand, as is discussed below, one may naturally expect that among all connected compact minimal surfaces $M'$ (with boundary)
\begin{equation}\label{eq:LengthDiameterconjecture}
  C_{LD}(n) := \inf_{M'} \frac{\ell(\partial M')}{d(M')} = 2,
\end{equation}
where $\ell(\partial M')$ denotes the length of the boundary $\partial M'$ in $\mathbf{R}^n$.
Note that $C_{LD}\leq2$ follows by a flat surface spanning a closed curve consisting of two parallel long segments capped by semicircles (which is optimal in the flat case).
Both the exact values of $C_T$ and $C_{LD}$ are still open due to substantial difficulties such as the nonlocality of diameter.
It is unclear even whether the values depend on $n$.

In this paper we discover that these constants can be directly related in a very simple way, verifying that
\begin{equation}\label{eq:constants}
  \frac{2}{\pi}C_T(n) \leq C_{LD}(n) \leq 2.
\end{equation}
This result not only gives a new explicit lower bound of $C_{LD}$ (since for example $C_T\geq\pi/32$ is already known, see below) but also implies that, somewhat surprisingly, $C_{LD}=2$ would automatically follow if $C_T=\pi$ were verified, thus making those conjectures better grounded.
In particular, our result seems the first attempt to understand the optimal value of $C_{LD}$.
In fact, inequality \eqref{eq:constants} directly follows by our main result that asserts a more general diameter bound for compact surfaces:

\begin{theorem}\label{thm:main}
  Let $M^2\hookrightarrow\mathbf{R}^n$ be an immersed connected compact surface, where $n\geq3$.
  Then
  \begin{equation}\label{eq:main}
    d(M) \leq \frac{1}{C_T(n)}\left(2\int_M|H|+\frac{\pi}{2}\ell(\partial M)\right).
  \end{equation}
\end{theorem}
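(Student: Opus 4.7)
The plan is to construct, for each small $\epsilon>0$, an immersed connected closed surface $\tilde M_\epsilon\hookrightarrow\mathbf{R}^n$ such that $d(\tilde M_\epsilon)\ge d(M)$ and
\[
\int_{\tilde M_\epsilon}|H|\;\longrightarrow\; 2\int_M|H|+\frac{\pi}{2}\ell(\partial M)\qquad\text{as } \epsilon\to 0.
\]
Applying the defining inequality $d(\tilde M_\epsilon)\le C_T(n)^{-1}\int_{\tilde M_\epsilon}|H|$ and passing to the limit then yields \eqref{eq:main}.

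\textbf{Construction.} Set $\tilde M_\epsilon := M\cup T_\epsilon\cup M_\epsilon$, where $M_\epsilon$ is the $\epsilon\nu$-pushoff of $M$ by some smooth unit normal vector field $\nu$ (which exists since $n\ge 3$ ensures a nontrivial normal bundle along at least a collar of $\partial M$), and $T_\epsilon$ is a half-tube of radius $\epsilon/2$ around $\partial M$. Fiberwise over each point $p\in\partial M$, $T_\epsilon$ is the half-circle of radius $\epsilon/2$ in the 2-plane $\mathrm{span}(n_p,\nu_p)\subset\mathbf{R}^n$ (with $n_p\in T_pM$ the unit conormal to $\partial M$) joining $p$ to $p+\epsilon\nu_p$; with this choice of plane, the tangent plane of $T_\epsilon$ along $\partial M$ agrees with $T_pM$, so $T_\epsilon$ joins $C^1$-smoothly to $M$ there, and likewise to $M_\epsilon$. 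After a small $C^\infty$ smoothing of the two junction curves, $\tilde M_\epsilon$ is a smoothly immersed closed connected surface containing the image of $M$.

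\textbf{Asymptotics.} The half-tube has principal curvatures approximately $2/\epsilon$ and $0$, so $|H_{T_\epsilon}|\approx 1/\epsilon$; its area is $(\pi\epsilon/2)\ell(\partial M)+o(\epsilon)$, giving $\int_{T_\epsilon}|H|\to (\pi/2)\ell(\partial M)$. Also $\int_{M_\epsilon}|H|\to\int_M|H|$ by smooth convergence $M_\epsilon\to M$, and the smoothing can be arranged on a subscale to contribute only $o(1)$ to $\int|H|$. Since $M\subset\tilde M_\epsilon$ one has $d(\tilde M_\epsilon)\ge d(M)$, hence $d(M)\le C_T(n)^{-1}\int_{\tilde M_\epsilon}|H|$, and sending $\epsilon\to 0$ produces \eqref{eq:main}.

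\textbf{Main obstacle.} The principal technical challenge is to realize $\tilde M_\epsilon$ as a genuine smooth immersion and to control the smoothing cost to $o(1)$. In codimension $\ge 2$ there is ample freedom for choosing $\nu$ and the half-tube plane. In codimension one the normal line bundle of $M$ may fail to admit a global unit section, but a collar of $\partial M$ in $M$ is always orientable and so $\nu$ exists there, which suffices because $M_\epsilon$ can be defined globally by any smooth perturbation; one must also rule out spurious self-intersections among $M$, $M_\epsilon$, and $T_\epsilon$. Since all modifications are concentrated within an $O(\epsilon)$-tubular neighborhood of $\partial M$, these are essentially routine perturbative matters.
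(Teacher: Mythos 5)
Your proposal follows the same overall strategy as the paper: build a family of closed surfaces that collapse onto a doubled copy of $M$, so that the curvature energy tends to $2\int_M|H|+\frac{\pi}{2}\ell(\partial M)$, and then invoke the definition of $C_T(n)$. (You in fact only need the one-sided bound $d(\tilde M_\epsilon)\geq d(M)$, which you use; the paper proves the two-sided limit but that is not required for Theorem~\ref{thm:main}.) The concrete gluing is different, though, and the paper's is cleaner. You separate the two copies by a normal pushoff $M_\epsilon=M+\epsilon\nu$ and join them by a genuine half-tube of radius $\epsilon/2$ swept along $\partial M$. This forces you to confront the existence of a global unit normal $\nu$ (problematic for non-orientable $M$ in codimension one), and the joint with $M_\epsilon$ is only $C^1$ up to an $O(\epsilon)$ corner, so you must appeal to an additional smoothing step and argue its $o(1)$ cost. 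The paper sidesteps both issues by not pushing off at all: it takes two \emph{identical} copies $M$, $M'$ and glues their boundaries with a surface swept out by a teardrop-shaped curve (Lemma~\ref{lem:teardrop}) that starts and ends at the origin with opposite tangents and has total absolute curvature tending to $\pi$. Because the teardrop returns to its starting point, both $M$ and $M'$ attach along the very same curve $\partial M$, the glued surface is automatically $C^{1,1}$ with no corner to smooth, and the only vector fields needed are the tangent $e_1$, the conormal $e_2$, and one orthogonal field $e_3$ along $\partial M$, which always exist regardless of orientability of $M$. Your half-circle has total curvature exactly $\pi$, matching the teardrop's limiting value, so the asymptotics agree. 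The remaining gaps in your write-up are thus the global definition of $\nu$ (workable via a cutoff supported in a boundary collar, but not spelled out) and the quantitative control of the smoothing; both can be fixed, but the teardrop construction eliminates them from the outset.
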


Now we review relevant previous results more precisely.

The Topping conjecture \cite{Topping1998} originally predicts that $d(M)<\frac{1}{\pi}\int_M|H|$ holds for every connected closed surfaces $M^2\hookrightarrow\mathbf{R}^3$, motivated by L.\ Simon's diameter estimate.
Concerning the lower bound of $C_T$, the weaker but universal estimate $C_T\geq\pi/32$ (independent of $n$) is first proved by Topping himself \cite{Topping2008}, where the intrinsic diameter is bounded in the form of $d_\mathrm{int}\leq c_m\int_{M^m}|H|^{m-1}$ in general (co)dimension.
For low codimensions, this can be now slightly improved by Brendle's recent result \cite{Brendle2021} on the Michael-Simon Sobolev inequality, cf.\ Appendix \ref{sec:appendix}; in particular, $C_T(3)\geq\pi/16$.
On the other hand, the conjectured lower bound $\pi$ is supported by several partial answers, namely for convex surfaces (classically), constant mean curvature surfaces \cite{Topping1997}, and spheres of revolution \cite{Miura2021}.

Concerning the length-diameter ratio for minimal surfaces \eqref{eq:LengthDiameterconjecture}, to the best of the author's knowledge, the nontrivial positivity of $C_{LD}$ is just recently verified by Menne-Scharrer's result \eqref{eq:MenneScharrer} below, while its expected value $C_{LD}=2$ is not explicitly stated in the literature.
However, this expectation seems quite reasonable because it asks an extendibility of codimension from the trivial flat case (in $\mathbf{R}^2$), while such a flat-to-minimal codimension-extension problem is extensively studied for the isoperimetric inequality and now verified up to codimension two by Brendle; see \cite{Brendle2021} and references therein.
In addition, we indicate that estimating $C_{LD}$ is also meaningful in view of nonexistence theory in the classical Plateau-Douglas problem; we discuss this point in Section \ref{sec:Plateau-Douglas} as part of the main contents.

For general surfaces-with-boundary, Menne-Scharrer's recent result \cite{Menne2017} provides a diameter bound in a fairly general framework of varifolds, which is new even for smooth surfaces: For $1<m<n$ and a connected compact $m$-dimensional submanifold $M^m$ of $\mathbf{R}^n$, the intrinsic diameter $d_\mathrm{int}(M)$ has a bound of the form
\begin{align}\label{eq:MenneScharrer}
  d_\mathrm{int}(M)\leq c_m\left( \int_{M}|H|^{m-1} + \int_{\partial M}|H_{\partial M}|^{m-2} \right).
\end{align}
This estimate is consistent with \eqref{eq:main} since if $m=2$, the last term is interpreted as the length $\ell(\partial M)$.
Their result extends both Topping's result for closed submanifolds \cite{Topping2008} and also Paeng's result for geodesically convex compact surfaces ($m=2$) \cite{Paeng2014}.
Their method is based on an oscillatory characterization of the diameter, which allows them to deal with singular objects, albeit $c_m$ is chosen in a somewhat involved way.
Compared to Menne-Scharrer's result, our present study is less general but our focus is optimality rather than generality.
Our main contribution is giving not only explicit constants but also revealing a direct quantitative link between the closed and non-closed cases by a completely different approach.

In fact, our proof is based on the very simple geometric idea to construct a thin closed surface by enclosing a given compact surface and to compare their energies.
The potentially optimal boundary term in \eqref{eq:main} then appears as a singular limit; this part crucially relies on two-dimensionality of surfaces.
The factor $2$ in front of $\int_M|H|$ would be non-optimal but comes from our ``doubling'' procedure.

In Section \ref{sec:enclosing} we prove Theorem \ref{thm:main}.
In Section \ref{sec:Plateau-Douglas} we indicate that Theorem \ref{thm:main} (or Menne-Scharrer's preceding result) directly implies a novel nonexistence criterion in the Plateau-Douglas problem by exhibiting several concrete examples.
In Appendix \ref{sec:appendix}, some improved bounds for $C_T$ are verified for the reader's convenience.

\subsection*{Acknowledgments}

The author would like to thank Peter Topping for his interest on this paper, and also for pointing out that Brendle's recent result implies an improved diameter bound.
He also thanks Genki Hosono and Yuichi Ike for stimulating discussions.
This work is supported by JSPS KAKENHI Grant Numbers 18H03670, 20K14341, and 21H00990, and by Grant for Basic Science Research Projects from The Sumitomo Foundation.

\section{Bounding diameter for surfaces with boundary}\label{sec:enclosing}

In this section we prove Theorem \ref{thm:main}.
The key step is to ensure the following

\begin{proposition}[Construction of closed surfaces]\label{lem:enclosing}
  For any connected compact surface $M$ immersed into $\mathbf{R}^n$ with $\partial M\neq\emptyset$, there exists a sequence $\{\Sigma_k\}_k$ of connected closed surfaces in $\mathbf{R}^n$ such that
  \begin{align}
    \lim_{k\to\infty}\int_{\Sigma_k}|H| &= 2\int_M|H| + \frac{\pi}{2}\ell(\partial M), \label{eq:enclosing1}\\
    \lim_{k\to\infty}d(\Sigma_k) &=d(M). \label{eq:enclosing2}
  \end{align}
\end{proposition}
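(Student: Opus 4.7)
The plan is to build $\Sigma_\varepsilon$ by topologically doubling $M$ along its boundary, i.e., forming the closed $2$-manifold $\tilde M$ obtained by gluing two copies of $M$ along $\partial M$, and then immersing $\tilde M$ into $\mathbf{R}^n$ so that the seam is smoothed into a half-cylindrical collar of small radius $\varepsilon$; the desired sequence will be obtained by letting $\varepsilon \to 0$. Connectedness of $\tilde M$ follows from that of $M$, and $\tilde M$ is closed by construction.

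First I set up the geometry near $\partial M$. The tubular neighborhood theorem supplies a smooth embedding $\phi \colon \partial M \times [0, \delta] \to M$ with $\phi(\cdot, 0) = \gamma$ parametrizing $\partial M$ by arclength and $t \mapsto \phi(s, t)$ the unit-speed geodesic normal. Since the ambient codimension $n - 2 \geq 1$, the normal bundle $(TM)^\perp \subset \mathbf{R}^n$ restricted to $\partial M$ admits a smooth unit section $\nu(s)$ (passing to a $2$-fold cover if orientability fails when $n = 3$, which does not affect the asymptotic quantities). For $\varepsilon \in (0, \delta/2)$ I define the immersion $F_\varepsilon \colon \tilde M \to \mathbf{R}^n$ as follows: on each of the two copies $M_1, M_2 \subset \tilde M$ of $M \setminus \phi(\partial M \times [0, 2\varepsilon))$, take the original immersion $f$; on the seam collar $\partial M \times [-2\varepsilon, 2\varepsilon] \subset \tilde M$, prescribe a smooth profile that near the middle traces a half-circle of radius $\varepsilon$ in the $2$-plane $\mathrm{span}\{\tau(s), \nu(s)\}$ (with $\tau(s) = \partial_t \phi(s, 0)$ the inward conormal), and at $|t| = 2\varepsilon$ matches $\phi(s, 2\varepsilon)$ with first-order contact.

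Next I verify the two limits. Splitting $\int_{\Sigma_\varepsilon} |H|$ into bulk and seam-collar contributions, each bulk piece gives $\int_M |H| - \int_{\phi(\partial M \times [0, 2\varepsilon))} |H| = \int_M |H| + o(1)$ as $\varepsilon \to 0$, since the excised collar has vanishing area. The seam collar, modeled to leading order on a half-cylinder of radius $\varepsilon$ over $\partial M$, has principal curvatures $1/\varepsilon$ and $0$, hence $|H| = 1/(2\varepsilon)$ and area $\pi\varepsilon\,\ell(\partial M) + o(\varepsilon)$, so it contributes $\frac{\pi}{2}\ell(\partial M) + o(1)$; the perturbations coming from the second fundamental form of $M$ along $\partial M$ and from the smoothing interpolation only contribute $o(1)$. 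For the diameter, every point of $\Sigma_\varepsilon$ lies within distance $O(\varepsilon)$ of $f(M)$, so $\limsup_{\varepsilon \to 0} d(\Sigma_\varepsilon) \leq d(M)$; conversely $\Sigma_\varepsilon$ contains a copy of $f(M \setminus \phi(\partial M \times [0, 2\varepsilon)))$ whose diameter tends to $d(M)$ by compactness.

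The main obstacle is the mean-curvature estimate on the seam collar: I must arrange the smooth interpolation between the half-cylinder core and the two flanks so that the second fundamental form stays $O(1/\varepsilon)$ on a region of area $O(\varepsilon)$, with error relative to the ideal half-cylinder model being $o(1)$. The two-dimensionality of $M$ is essential here, because the clean coefficient $\frac{\pi}{2}\ell(\partial M)$ arises from the one-dimensional angular integral $\int_0^\pi \frac{1}{2\varepsilon}\cdot\varepsilon\, d\theta = \frac{\pi}{2}$, which has no direct analogue if $M$ had higher dimension.
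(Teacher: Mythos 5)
Your overall plan matches the paper's---topological doubling, a rounded collar glued along the seam, and splitting $\int|H|$ into bulk plus collar---and your diameter argument is fine. But the collar curvature estimate, which you correctly flag as the main obstacle, has a real gap you do not close, and with the scales you actually fix it fails. In the $(\tau,\nu)$-plane your profile begins and ends at $\phi(s,2\varepsilon)$ with opposite tangents $\mp\tau$ and traces a half-circle of radius $\varepsilon$ near the middle; the half-circle's endpoints are $2\varepsilon$ apart, so each flank must carry the curve from $\phi(s,2\varepsilon)$ to a cap endpoint offset by $\varepsilon$ in the $\nu$-direction over arclength comparable to $\varepsilon$, while returning its tangent to $-\tau$ at both ends. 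Writing $\theta$ for the tangent angle, $\int\sin\theta\,ds=\varepsilon$ over arclength $O(\varepsilon)$ forces $|\sin\theta|$ to reach a value bounded below independently of $\varepsilon$; hence each flank has total absolute curvature bounded below by a positive constant, the profile's total absolute curvature tends to $\pi+c_0$ with $c_0>0$, and the seam contribution converges to $\tfrac12\ell(\partial M)(\pi+c_0)$ rather than $\tfrac{\pi}{2}\ell(\partial M)$. Lengthening the flanks at fixed scale does not help, since the collar then picks up an $O(1)$ contribution from the boundary curve's own curvature. The only way to kill the flank term is to decouple the two scales so that the cap radius is much smaller than the flank length, making the flank slopes tend to zero. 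This is precisely the role of the teardrop curve $\gamma_k$ of Lemma~\ref{lem:teardrop}: the cap has radius $1/k$ but the flanks have fixed length, $\int_{\gamma_k}|\kappa|\to\pi$, and only afterwards is the whole profile dilated by $\varepsilon_k$. Without such a double limit the leading constant comes out wrong, not merely with an unverified error term.

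A smaller point: the parenthetical about passing to a $2$-fold cover when $n=3$ is both unnecessary and would break the construction. For $n=3$, $e_3:=e_1\times e_2$ (tangent cross outward conormal) is always a global unit section of the normal bundle along each $\partial M_i$, with no orientability hypothesis on $M$; for $n\geq 4$ the orthogonal complement of $\operatorname{span}\{e_1,e_2\}$ along $\partial M_i$ has rank $n-2\geq2$ over a circle and hence admits a nowhere-zero section. And if a double cover of $\partial M_i$ were truly needed, the collar would no longer glue up to a closed surface.
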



We first observe that Theorem \ref{thm:main} immediately follows from Proposition \ref{lem:enclosing}.

\begin{proof}[Proof of Theorem \ref{thm:main}]
  Let $M$ be given.
  We may suppose that $\partial M\neq\emptyset$.
  Take a sequence $\{\Sigma_k\}$ in Proposition \ref{lem:enclosing}.
  Then by definition of $C_T(n)$ we have
  $$\frac{1}{d(\Sigma_k)}\int_{\Sigma_k}|H| \geq C_T(n).$$
  Taking the limit that $k\to\infty$, we deduce \eqref{eq:main} from \eqref{eq:enclosing1} and \eqref{eq:enclosing2}.
\end{proof}

In the remaining part we prove Proposition \ref{lem:enclosing}.

The primal idea is to enclose a compact surface $M$ by a thin closed surface $\Sigma_k$.
In the simplest case of codimension one ($n=3$) and $M$ embedded, we may just take $\Sigma_k$ as the boundary of the $1\over k$-neighborhood of $M$ in $\mathbf{R}^3$ for $k\gg1$ as in Figure \ref{fig:enclosing1}.
Then, roughly speaking, the enclosing surface consists of two parts; a ``nearly double-cover'' of $M$ and ``curved half cylinders of radius $1/k$'' along the boundary $\partial M$.
The curvature energy of the former part clearly converges to twice that of $M$ as $k\to\infty$, while the latter yields $\frac{\pi}{2}\ell(\partial M)$ as a singular limit.

\begin{figure}[htbp]
  \includegraphics[width=50mm]{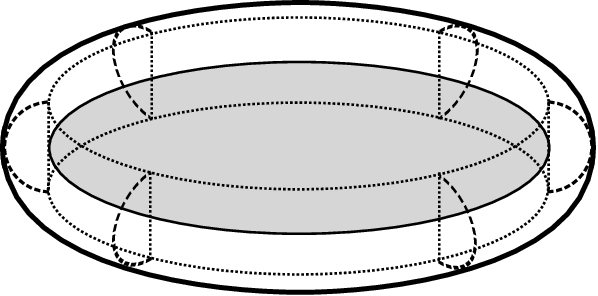}
  \caption{The neighborhood closed surface enclosing a disk.}
  \label{fig:enclosing1}
\end{figure}

The above idea can be extended to non-embedded surfaces almost straightforwardly, but not very directly to higher codimensions since we have no canonical choice of normal directions.
In order to give a unified proof for an arbitrary codimension, we slightly modify our idea; we double $M$ and glue their boundaries via a new surface made by a teardrop-shaped curve (with a cusp) as in Figure \ref{fig:enclosing2}, so that the resulting closed surface looks ``pressed'' to touch $M$ from both sides.
This modification allows us to only think of the boundary construction, which is essentially reduced to the case of codimension one by using a three-dimensional frame associated with each boundary curve.

\begin{figure}[htbp]
  \includegraphics[width=60mm]{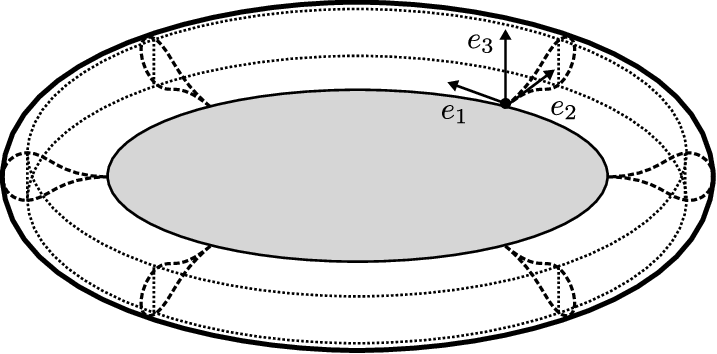}
  \caption{A closed surface enclosing a disk created by a teardrop curve.}
  \label{fig:enclosing2}
\end{figure}

Before entering the proof we prepare a teardrop curve for later use.

\begin{lemma}[Teardrop curve]\label{lem:teardrop}
  There is a sequence $\{\gamma_k\}$ of unit-speed smooth curves $\gamma_k=(x_k,y_k):[0,L_k]\to\mathbf{R}^2$ such that
  \begin{equation}\label{eq:teardrop1}
    \gamma_k(0) =\gamma_k(L_k)=(0,0), \quad \partial_s\gamma_k(0)=-\partial_s\gamma_k(L_k)=(1,0),
  \end{equation}
  \begin{equation}\label{eq:teardrop2}
    \max_{s\in[0,L_k]}|\gamma_k(s)| \leq 2,
  \end{equation}
  \begin{equation}\label{eq:teardrop3}
    \lim_{k\to\infty}\int_{\gamma_k}|\kappa|ds = \pi,
  \end{equation}
  where $\kappa$ and $s$ denote the curvature and the arclength parameter, respectively.
\end{lemma}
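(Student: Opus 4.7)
The plan is to realize each $\gamma_k$ as a $C^\infty$ unit-speed parametrization of a thin teardrop-shaped loop whose image touches the origin only at the two ``cusp'' endpoints. Setting $\epsilon_k:=1/k$, I would take the image to consist of three smoothly joined pieces: (i) an \emph{upper side} in the closed upper half-plane running from the origin to $(1,\epsilon_k)$, with horizontal tangent $(1,0)$ at both ends; (ii) a \emph{cap} near $(1,0)$ of diameter $O(\epsilon_k)$, reversing the tangent direction from $(1,0)$ at $(1,\epsilon_k)$ to $(-1,0)$ at $(1,-\epsilon_k)$ through total turning $-\pi$ (essentially a clockwise semicircle of radius $\epsilon_k$); and (iii) a \emph{lower side} from $(1,-\epsilon_k)$ back to the origin, the reflection of (i) about the $x$-axis. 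The three partial displacements $(1,\epsilon_k)$, $(0,-2\epsilon_k)$, and $(-1,\epsilon_k)$ sum to zero, closing the loop.

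For the $C^\infty$ gluing, I would prescribe the tangent-angle function $\theta_k:[0,L_k]\to\mathbf{R}$ directly rather than the position, so that $\gamma_k(s):=\int_0^s(\cos\theta_k(\tau),\sin\theta_k(\tau))\,d\tau$ is automatically unit-speed and inherits the regularity of $\theta_k$. On the cap I take $\theta_k$ to decrease monotonically from $0$ to $-\pi$ with all derivatives vanishing at the cap's two endpoints, realized by a standard $C^\infty$ bump profile for the curvature $\theta_k'$ peaking at magnitude $\sim 1/\epsilon_k$ and integrating to $-\pi$. On the upper side $\theta_k$ is a small $C^\infty$ bump above $0$ of amplitude $O(\epsilon_k)$, supported in the interior and tuned so that the upper side's $y$-displacement equals exactly $\epsilon_k$; a symmetric bump below $-\pi$ handles the lower side. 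Smooth matching at all four junctions is guaranteed because $\theta_k$ is locally constant (with all derivatives vanishing) on a neighborhood of each junction.

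Verification of the three conditions is then routine. Condition \eqref{eq:teardrop1} is built in. For \eqref{eq:teardrop2}, the cap lies essentially in the closed disk of radius $\epsilon_k$ about $(1,0)$ while the upper and lower sides lie in $[0,1]\times[-\epsilon_k,\epsilon_k]$, so $|\gamma_k(s)|\leq 1+\epsilon_k\leq 2$. For \eqref{eq:teardrop3}, $\int_{\gamma_k}|\kappa|\,ds$ equals the total variation of $\theta_k$, which is $\pi$ (from the monotone cap) plus $O(\epsilon_k)$ (from the two side bumps), tending to $\pi$. The main technical point is enforcing the closure $\gamma_k(L_k)=(0,0)$ simultaneously with $C^\infty$ regularity; by the reflective symmetry of the construction about the $x$-axis the horizontal displacements cancel automatically, so only the upper-side bump amplitude remains to be tuned, a one-dimensional problem solvable by a continuity argument.
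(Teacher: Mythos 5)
Your construction is correct and is essentially the paper's: a thin teardrop whose total absolute curvature concentrates in a small cap of turning $\pi$ while the two nearly-flat sides contribute only $O(\epsilon_k)$, so \eqref{eq:teardrop1}--\eqref{eq:teardrop3} follow. The only difference is technical: the paper glues the graphs $y=\pm\frac{1}{k}f(x)$ to an exact half-circle, obtaining a $C^{1,1}$ piecewise-smooth curve (closure being automatic) and then smooths by approximation, whereas you prescribe the turning angle to produce a genuinely $C^\infty$ curve directly, at the cost of the symmetry-plus-one-parameter-tuning argument for closure, which indeed works.
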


\begin{proof}
  By approximation we only need to construct $\gamma_k$ of class $C^{1,1}$ ($=W^{2,\infty}$) and piecewise smooth.
  Define $\gamma_k$ by connecting the origin and the half-circle $C_k:=\{(x-1)^2+y^2=1/k^2,\ x\geq1\}$ via the graph curves $\{y=\pm\frac{1}{k}f(x),\ 0\leq x \leq 1\}$, where $f:[0,1]\to[0,1]$ is any fixed smooth function such that $f\equiv 0$ around $0$ and $f\equiv 1$ around $1$.
  By construction all the assertions are trivial except for \eqref{eq:teardrop3}.
  Convergence \eqref{eq:teardrop3} is also simply confirmed since the total absolute curvature is nothing but the total variation of the tangential angle function; for the half-circle $\int_{C_k}|\kappa|ds=\pi$, while $\frac{1}{k}f\to0$ smoothly so that $\int_{\text{graph}(\pm\frac{1}{k}f)}|\kappa|ds\to0$.
\end{proof}

We are now in a position to give a precise proof of Proposition \ref{lem:enclosing}.

\begin{proof}[Proof of Proposition \ref{lem:enclosing}]
  Fix any connected compact surface $M$ immersed into $\mathbf{R}^n$.
  By approximation it suffices to construct a sequence of surfaces of class $C^{1,1}$ ($=W^{2,\infty}$) and piecewise smooth.

  {\em Step1: Construction.}
  We first prepare a three-dimensional frame $\{e_1,e_2,e_3\}$ along the boundary $\partial M$ for a gluing procedure.
  For each of the boundary components $\partial M_i$, $i=1,\dots,N$, being a closed curve of length $L_i:=\ell(\partial M_i)>0$, we let $c_i:\mathbf{R}/L_i\mathbf{Z}\to\partial M_i\subset\mathbf{R}^n$ be a unit-speed parameterization.
  Now with each $\partial M_i$ we assign the (global) vector fields $e_1:=\partial_sc_i$, the unit tangent, and $e_2$ to be the outward pointing unit conormal of the surface $M$ at the boundary.
  Then we choose one more vector field $e_3$ orthogonal to both $e_1$ and $e_2$.
  Note that such an $e_3$ always exists.
  Indeed, for $n=3$ we may just take $e_3=e_1\times e_2$.
  For $n\geq4$, if we let $N'$ denote the subbundle of the normal bundle to $\partial M_i$ in $\mathbf{R}^n$ defined by $N':=\coprod\operatorname{span}\{e_2\}^\perp$, where $\perp$ stands for the orthogonal complement, then $N'$ is locally trivialized as $I\times\mathbf{R}^{n-2}$, where $I$ is an interval, and hence there is no (topological) obstruction to choose a global section of $N'$ as $n-2\geq2$.
  Slightly abusing the notation, we also mean by $e_j$ just a smooth map from $\mathbf{R}/L_i\mathbf{Z}$ to $\mathbf{S}^{n-1}\subset\mathbf{R}^n$.

  We then define a surface $S$ along the boundary $\partial M$, which will be used to close two copies of $M$.
  Given $\varepsilon>0$, integer $k>0$, and $i=1,\dots,N$, by using the teardrop curve $(x_k,y_k):[0,L_k]\to\mathbf{R}^2$ in Lemma \ref{lem:teardrop}, we define the surface map $S=S_{\varepsilon,k,i}:\mathbf{R}/L_i\mathbf{Z}\times[0,L_k]\to\mathbf{R}^n$ by
  \begin{equation}
    S_{\varepsilon,k,i}(\sigma,s) := c_i(\sigma) + \varepsilon x_k(s)e_2(\sigma) + \varepsilon y_k(s)e_3(\sigma).
  \end{equation}
  Note that since $\{e_1,e_2,e_3\}$ are smooth maps on a compact set, there is some $\bar{\varepsilon}>0$ such that for any $\varepsilon\in(0,\bar{\varepsilon})$ the patch $S$ is regular, i.e., $\partial_s S$ and $\partial_\sigma S$ are linearly independent; the smallness depends only on the frame but not on $k$ thanks to \eqref{eq:teardrop2}.

  Now we define a piecewise smooth closed surface $\Sigma_{\varepsilon,k}$ by gluing the boundaries of two copies of $M$, say $M$ and $M'$, via $S$.
  More precisely, for all $i$ we glue $\partial M_i$ to one boundary component $S(\mathbf{R}/L_i\mathbf{Z}\times\{0\})$, and $\partial M'_i$ to the other $S(\mathbf{R}/L_i\mathbf{Z}\times\{L_\varepsilon\})$, keeping their shapes.
  Notice that $\Sigma_{\varepsilon,k}$ is of class $C^{1,1}$ thanks to boundary condition \eqref{eq:teardrop1}.
  In addition, $\Sigma_{\varepsilon,k}=M\cup M'\cup S$ is connected since $S$ is joined with both $M$ and $M'$ so that $M$ and $M'$ are also joined via $S$.

  {\em Step 2: Quantitative behavior.}
  In what follows we will define $\widetilde{\Sigma}_k:=\Sigma_{\varepsilon_k,k}$ for a well-chosen $\varepsilon_k$ to satisfy the desired properties, \eqref{eq:enclosing1} and \eqref{eq:enclosing2}.

  Concerning diameter, by our construction, clearly $d(M)\leq d(\Sigma_{\varepsilon,k})$ holds.
  In addition, thanks to \eqref{eq:teardrop2}, for any $p\in\Sigma_{\varepsilon,k}$ there is $p'\in M$ such that $|p-p'|\leq 2\varepsilon$, and hence $\max_{p,q\in\Sigma_{\varepsilon,k}}|p-q|\leq \max_{p',q'\in M}|p'-q'| + 4\varepsilon$.
  We thus find that
  \begin{equation}\label{eq:enclosing4}
    |d(\Sigma_{\varepsilon,k}) - d(M)| \leq 4\varepsilon.
  \end{equation}

  Concerning the curvature energy, by definition we have
  \begin{equation}\label{eq:enclosing3}
    \int_{\Sigma_{\varepsilon,k}}|H| = \int_{M}|H| + \int_{M'}|H| + \sum_{i=1}^N \int_{S_{\varepsilon,k,i}}|H| = 2\int_{M}|H| + \sum_{i=1}^N \int_{S_{\varepsilon,k,i}}|H|.
  \end{equation}
  We now confirm that for each $i$ and $k$,
  \begin{equation}\label{eq:enclosing7}
    \lim_{\varepsilon\to0}\int_{S_{\varepsilon,k,i}}|H| = \frac{1}{2}\ell(\partial M_i)\int_{\gamma_k}|\kappa|ds.
  \end{equation}
  To this end we explicitly represent the integral.
  Let $g=(g_{jl})_{1\leq j,l \leq 2}$ denote the pull-back metric of the Euclidean metric of $\mathbf{R}^n$ under $S$ in the local coordinate $(s,\sigma)$; namely, $g_{jl}=\partial_jS\cdot\partial_lS$ where $\partial_1:=\partial_s$ and $\partial_2:=\partial_\sigma$.
  Recall that for $S$ the induced area measure is given by $\sqrt{\det{(g_{jl})}}dsd\sigma$, while the second fundamental form $A$ by $A_{jl}:=(\partial_{jl}S)^\perp$, where $\perp$ denotes the normal projection, and the mean curvature vector $H$ by $H=\frac{1}{2}g^{jl}A_{jl}$ (up to the sign, through the Einstein notation), where $(g^{jl}):=(g_{jl})^{-1}$.
  Thus we have
  \begin{equation}\label{eq:enclosing9}
    \int_{S}|H| = \frac{1}{2}\int_0^{L_i}\int_0^{L_\varepsilon} |g^{jl}A_{jl}| \sqrt{\det{(g_{jl})}}dsd\sigma.
  \end{equation}
  Now we expand the integrand in terms of $\varepsilon$ by explicit calculations.
  Since
  \begin{align*}
    \partial_1S = \varepsilon(\dot x_ke_2+\dot y_ke_3), \quad \partial_2S = e_1+\varepsilon(x_k\dot e_2+y_k\dot e_3),
  \end{align*}
  and since $e_1,e_2,e_3$ are orthogonal, we have as $\varepsilon\to0$,
  $$g_{11}=\varepsilon^2|\dot x_k^2+\dot y_k^2|=\varepsilon^2, \quad g_{22}=1+O(\varepsilon), \quad g_{12}=g_{21}=O(\varepsilon^2);$$
  here and in the sequel asymptotic notations are independent of the parameters $(s,\sigma)$ (but may depend on $k$).
  These imply that
  \begin{equation*}
    \sqrt{\det{(g_{jl})}} = \varepsilon + o(\varepsilon).
  \end{equation*}
  Then, noting that
  $$|g^{jl}A_{jl}| \sqrt{\det{(g_{jl})}} = \frac{1}{\sqrt{\det{(g_{jl})}}}(g_{22}A_{11}+g_{11}A_{22}-g_{12}A_{12}-g_{21}A_{21}),$$
  and that $|g_{11}A_{22}-g_{12}A_{12}-g_{21}A_{21}|=O(\varepsilon^2)$, we now only need to compute the term $g_{22}A_{11}$ ($=O(\varepsilon)$).
  Since $A_{11}=\varepsilon(\ddot x_ke_2^\perp+\ddot y_ke_3^\perp)$, we compute $e_j^{\perp}$ for $j=2,3$.
  In view of the Gram-Schmidt process, if we define the standard projection operator $\pi_u(v):=|u|^{-2}(u\cdot v)u$ (with $\pi_0(v):=0$) and let
  $$u_1:=\partial_1 S, \quad u_2:=\partial_2S-\pi_{u_1}(\partial_2S),$$ 
  then we have the representation
  \begin{align*}
    e_j^\perp &= e_j - \pi_{u_1}(e_j)-\pi_{u_2}(e_j).
  \end{align*}
  By using the above formulae for $\partial_1S$, $\partial_2S$, $g_{11}$, $g_{12}$, $g_{22}$, we first find that $|\pi_{u_2}(e_j)|=O(\varepsilon)$, and then by computing $e_j - \pi_{u_1}(e_j)$ we deduce that
  \begin{align*}
    \left| e_2^\perp - \left( (1-\dot x_k^2)e_2 - \dot x_k\dot y_ke_3 \right) \right| = O(\varepsilon), \quad \left| e_3^\perp - \left( (1-\dot y_k^2)e_3 - \dot x_k\dot y_ke_2 \right) \right| = O(\varepsilon).
  \end{align*}
  Then a direct computation shows that
  \begin{align*}
    \left|\ddot x_ke_2^\perp+\ddot y_ke_3^\perp\right|^2 &= \left|\left( \ddot x_k(1-\dot x_k^2)-\ddot y_k \dot x_k\dot y_k \right) e_2 + \left( \ddot y_k(1-\dot y_k^2)-\ddot x_k\dot x_k\dot y_k \right) e_3 \right|^2+o(1)\\
    &=|\ddot x_k\dot y_k - \ddot y_k\dot x_k|^2+o(1)=|\kappa|^2+o(1),
  \end{align*}
  and hence $|g_{22}A_{11}|=\varepsilon|\kappa|+o(\varepsilon)$.
  In summary, we have
  \begin{equation*}
    |g^{jl}A_{jl}|\sqrt{\det{(g_{jl})}}=|\kappa|+o(1).
  \end{equation*}
  Inserting this into \eqref{eq:enclosing9} and integrating over the domain $[0,L_i]=[0,\ell(\partial M_i)]$ of $\sigma$, we obtain \eqref{eq:enclosing7} in form of
  $$\int_{S_{\varepsilon,k,i}}|H| = \frac{1}{2}\ell(\partial M_i)\int_{\gamma_k}|\kappa|ds + o(1).$$

  We finally complete the proof by choosing $\varepsilon_k$.
  From \eqref{eq:enclosing7} we deduce that for any integer $k>0$ there is a small number $\varepsilon_k>0$, so that not only $\varepsilon_k<\bar{\varepsilon}$ (for $S$ being a surface) but also
  \begin{equation}\label{eq:enclosing5}
    \varepsilon_k < 1/k,
  \end{equation}
  such that for all $i=1,\dots,N$,
  $$\left|\int_{S_{\varepsilon_k,k,i}}|H| - \frac{1}{2}\ell(\partial M_i)\int_{\gamma_k}|\kappa|ds \right| \leq \frac{1}{k}.$$
  Using this estimate with $\sum_i\ell(\partial M_i)=\ell(M)$, and recalling \eqref{eq:teardrop3}, we obtain
  \begin{equation}\label{eq:enclosing8}
    \lim_{k\to\infty}\sum_{i=1}^N\int_{S_{\varepsilon_k,k,i}}|H| = \frac{1}{2}\ell(\partial M) \lim_{k\to\infty}\int_{\gamma_k}|\kappa|ds = \frac{\pi}{2}\ell(\partial M).
  \end{equation}
  We thus conclude that $\widetilde{\Sigma}_k:=\Sigma_{\varepsilon_k,k}$ is the desired surface, since \eqref{eq:enclosing4} and \eqref{eq:enclosing5} imply \eqref{eq:enclosing1}, while \eqref{eq:enclosing3} and \eqref{eq:enclosing8} imply \eqref{eq:enclosing2}.
\end{proof}

\begin{remark}
  Two-dimensionality of surfaces is essential in our argument.
  Indeed, if one attempts to extend our argument to higher dimensions, then an issue occurs in the singular limit near the boundary.
  For simplicity we consider an embedded submanifold of codimension one, $M^m\subset\mathbf{R}^{m+1}$, whose boundary has a locally flat part.
  Then the closed surface $\Sigma_\varepsilon$ defined by the $\varepsilon$-neighborhood locally looks like half of the cylinder $\Sigma_\varepsilon=\varepsilon\mathbf{S}^1\times[0,1]^{m-1}$, where we have $|H|\sim \varepsilon^{-1}$ and volume $\sim \varepsilon$ and hence $\int_{\Sigma_\varepsilon}|H|^{m-1}=O(\varepsilon^{2-m})\to\infty$ if $m>2$.
  (Recall that the exponent $m-1$ of the integrand naturally arises in view of scaling, cf.\ \eqref{eq:MenneScharrer}.)
  Our method is thus not directly applicable to higher dimensions.
\end{remark}

\section{Nonexistence theorem in the Plateau-Douglas problem}\label{sec:Plateau-Douglas}

In this section we discuss the Plateau-Douglas problem, also known as the Douglas problem or simply the (general) Plateau problem, focusing on the simplest case of codimension one.
The Plateau-Douglas problem asks whether a connected minimal surface $M\hookrightarrow\mathbf{R}^3$ spans a given boundary contour $\Gamma\subset\mathbf{R}^3$ that consists of mutually disjoint smooth Jordan curves.

In this problem the existence of a connected solution sensitively depends on the geometry of $\Gamma$, in contrast to the case that $\Gamma$ is connected (the classical Plateau problem).
A simple phenomenological example is a catenoid-shaped soap film spanning two parallel circular wires; such a connected film exists if the wires are close to each other, but the film will pinch off if the wires are pulled apart.
Up to rescaling, it is equivalent to shrink the circles while keeping the distance, as in Figure \ref{fig:doubly_connected}.
(We remark that Figure \ref{fig:doubly_connected} describes only stable catenoids; however, regardless of the stability, the boundary admits no catenoid below a critical radius.)

\begin{figure}[htbp]
  \includegraphics[width=80mm]{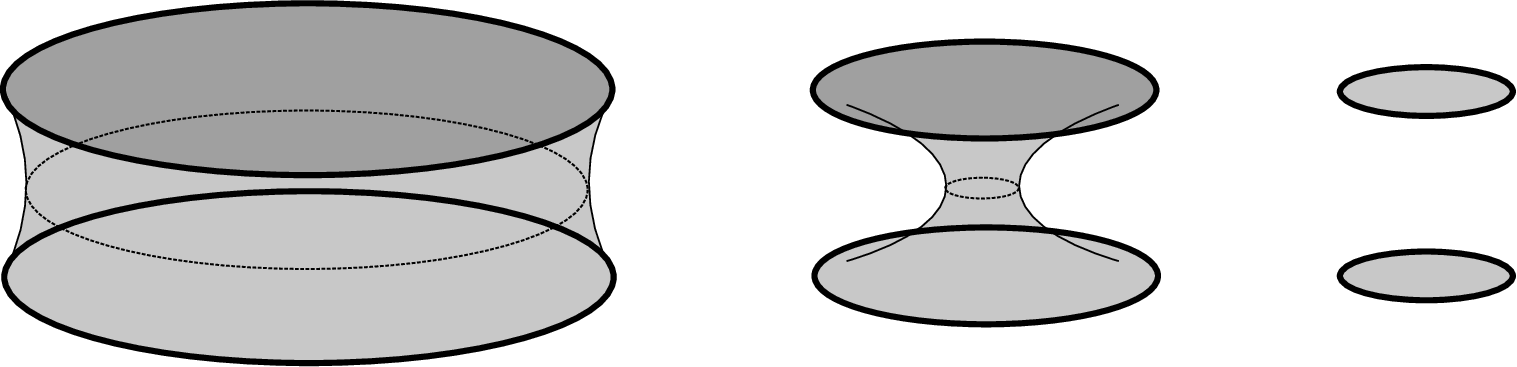}
  \caption{Pinching of a catenoid that spans shrinking circles.}
  \label{fig:doubly_connected}
\end{figure}

Up to now some sufficient conditions on $\Gamma$ for existence of connected minimal surfaces are known, including the celebrated Douglas condition (see e.g.\ \cite[Section 8]{Dierkes2010}), but they do not completely characterize the existence.
For this reason there are also many studies to explore nontrivial necessary conditions.

\if0
After Douglas and Rad\'{o} independently resolves the classical Plateau problem, where $\Gamma$ is a single Jordan curve, Douglas discovers a nontrivial sufficient condition for existence for $\Gamma$ consisting of two disjoint Jordan curves $\Gamma_1$ and $\Gamma_2$.
Namely, there is a connected minimal surface spanning $\Gamma$ if the least area disks $D_1$ and $D_2$ spanning $\Gamma_1$ and $\Gamma_2$, respectively, satisfy the so-called Douglas condition:
\begin{equation}
  \inf_{M}A(M) < A(D_1) + A(D_2),
\end{equation}
where $A$ denotes the surface area, and the infimum is taken over all annular-type surfaces spanning $\Gamma$.
\fi

Our result \eqref{eq:constants} combined with \eqref{eq:ToppingconstantLBimproved} and the fact that $d(\partial M)\leq d(M)$ directly implies a nonexistence criterion, which also explains certain pinching phenomena:

\begin{corollary}\label{cor:nonexistence}
  Let $\Gamma\subset\mathbf{R}^3$ be a boundary contour such that
  \begin{equation}\label{eq:newcondition}
    d(\Gamma)>8\ell(\Gamma).
  \end{equation}
  Then there exists no connected compact minimal surface spanning $\Gamma$.
\end{corollary}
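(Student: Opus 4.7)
The plan is an immediate contradiction argument chaining together Theorem \ref{thm:main} (for minimal surfaces), the known lower bound on $C_T(3)$, and the trivial inclusion $\partial M \subseteq M$.

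Suppose, for contradiction, that there exists a connected immersed compact minimal surface $M \hookrightarrow \mathbf{R}^3$ with $\partial M = \Gamma$. Since $H \equiv 0$ on $M$, I would invoke the minimal-surface form \eqref{eq:main2} of Theorem \ref{thm:main}, namely
\begin{equation*}
  d(M) \leq \frac{\pi}{2 C_T(3)}\,\ell(\partial M).
\end{equation*}
Next, the lower bound in \eqref{eq:ToppingconstantLB} gives $C_T(3) \geq \pi/16$, so that the prefactor satisfies $\pi/(2 C_T(3)) \leq 8$, and consequently
\begin{equation*}
  d(M) \leq 8\,\ell(\partial M).
\end{equation*}

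The final step is to replace $M$ by $\Gamma$ on the left. Since $\Gamma = \partial M \subseteq M$, the extrinsic diameter is monotone: $d(\Gamma) = d(\partial M) \leq d(M)$. Combining this with $\ell(\partial M) = \ell(\Gamma)$ yields $d(\Gamma) \leq 8\,\ell(\Gamma)$, directly contradicting the standing hypothesis \eqref{eq:newcondition}. Hence no such $M$ exists.

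There is essentially no obstacle: the corollary is a bookkeeping consequence of specializing \eqref{eq:main} to $H \equiv 0$ and plugging in the explicit lower bound $C_T(3) \geq \pi/16$. The only small point worth being explicit about is the interpretation of "spanning": we take it to mean that $M$ is a connected compact immersed surface with $\partial M = \Gamma$, so that the monotonicity $d(\Gamma) \leq d(M)$ holds without any further qualification.
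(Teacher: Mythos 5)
Your argument is correct and is exactly the paper's intended proof: combine the minimal-surface bound \eqref{eq:main2} with the lower bound $C_T(3)\geq\pi/16$ from \eqref{eq:ToppingconstantLB} and the monotonicity $d(\Gamma)=d(\partial M)\leq d(M)$. Nothing is missing; the paper presents the same chain of inequalities as an immediate consequence of Theorem \ref{thm:main}.
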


\begin{remark}
  Recall that $8$ is expected to be replaced by $1/2$ in view of the Topping conjecture.
  Up to a universal constant, Menne-Scharrer's estimate \eqref{eq:MenneScharrer} also gives the same type of criterion since $d\leq d_\mathrm{int}$.
\end{remark}

In the rest of this section we observe that this kind of criterion is not covered by existing criteria, by exhibiting some concrete examples.

After Nitsche's pioneering studies, many authors establish several kinds of nonexistence criteria (see e.g.\ \cite{Nitsche1964a,Nitsche1964,Nitsche1965,Bailyn1967,Nitsche1968,Hildebrandt1972,Osserman1975,Li1984,Dierkes1990,Rossman1998,Lopez2001,Dierkes2005,Lopez2017} and also \cite[Chapter 4]{Dierkes2010a}), most of which are based on the maximum principle (except for some results using special geometry).
Roughly speaking, such results assert that no connected solution spans a boundary being ``divided into two parts far from each other''.
For example, a well-known criterion of cone type, which is first obtained by Hildebrandt \cite{Hildebrandt1972} and later improved by Osserman-Schiffer \cite{Osserman1975}, ensures that no solution exists if $\Gamma$ is the union of $\Gamma_1$ and $\Gamma_2$ (not necessarily connected) such that
\begin{equation}\label{eq:unidirectional}
  \mbox{$\Gamma_1\subset K\cap\{z>0\}$ and $\Gamma_2\subset K\cap\{z<0\}$ for $K:=\{x^2+y^2<z^2\sinh^2\tau\}$},
\end{equation}
where $\tau$ is the unique positive solution to $\cosh\tau=\tau\sinh\tau$.
The prefactor $\sinh^2\tau\approx2.27$ is known to be optimal, cf.\ \cite{Osserman1975}.
This kind of criterion is sufficient for explaining the pinching of a catenoid.
However, it does not necessarily cover the case where many circles are simultaneously pulled apart in some random directions, or equivalently, shrunk into several randomly distributed points, cf.\ Figure \ref{fig:uniform}.

\begin{figure}[htbp]
  \includegraphics[width=60mm]{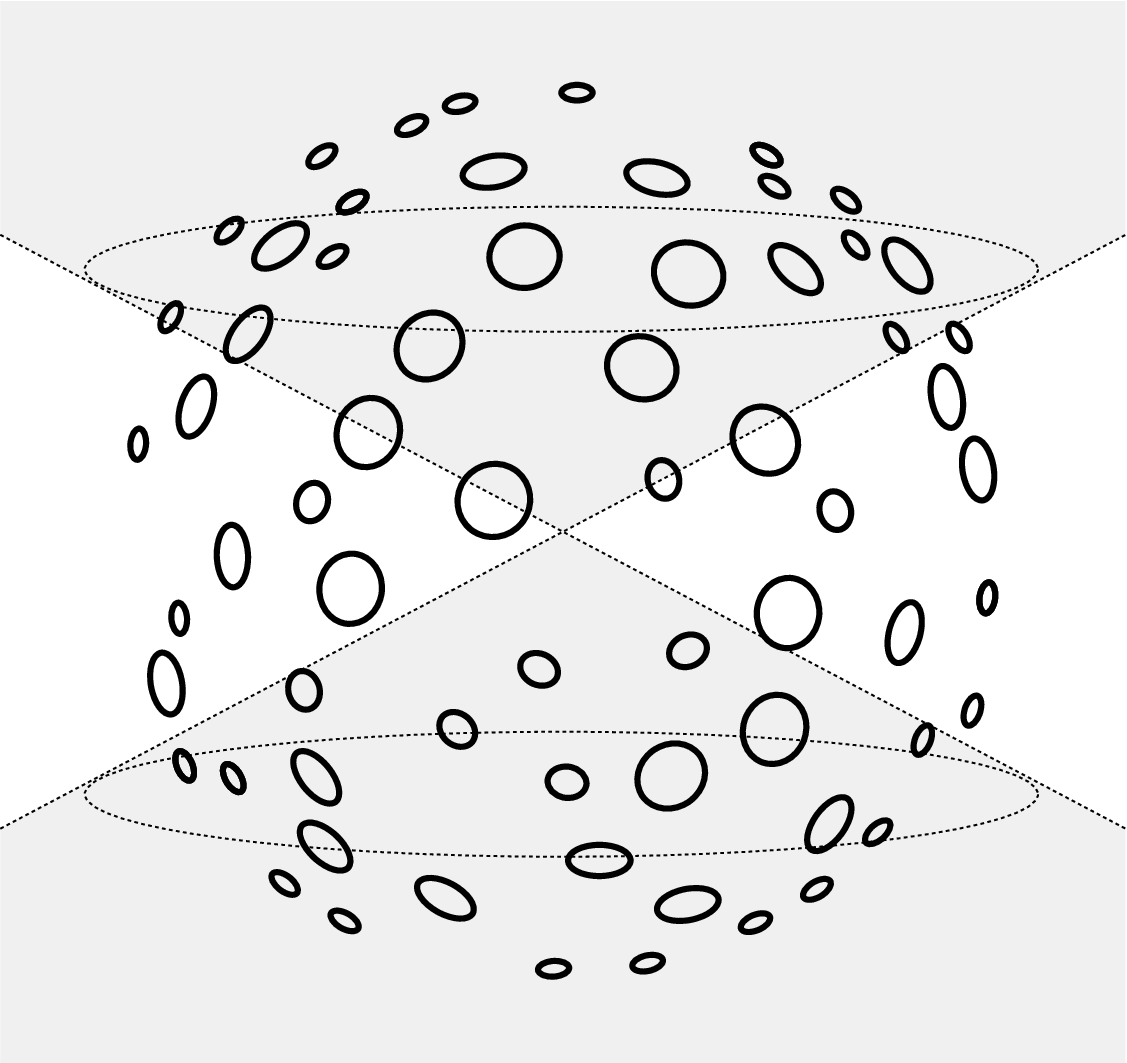}
  \caption{A boundary contour $\Gamma$ consisting of randomly distributed small circles, which cannot be separated by the cone $K$.}
  \label{fig:uniform}
\end{figure}

Recently, White gave an essentially different criterion \cite[Corollary 9]{White2016} (see also \cite{Seo2017}) by making use of an extended monotonicity theorem for density (due to \mbox{Gromov} \cite[Theorem 8.2.A]{Gromov1983} and rediscovered in \cite{Ekholm2002}).
It asserts that no solution spans $\Gamma$ if there is a decomposition $\Gamma=\Gamma_1\cup\Gamma_2$ such that
\begin{equation}\label{eq:White}
  \mathrm{dist}(\Gamma_1,\Gamma_2) > \frac{1}{\pi}\ell(\Gamma).
\end{equation}
This looks similar to our one \eqref{eq:newcondition} as both the conditions require smallness of length, and in fact shares the common feature of being ``less unidirectional''.

We shall discuss ``multi-directionality'' through a concrete example of a boundary contour that is covered by both \eqref{eq:newcondition} and \eqref{eq:White} but not by \eqref{eq:unidirectional}.

\begin{example}[Small circles on the sphere: Fixed centers]\label{example1}
  Let $X$ be any finite set in the unit sphere $\mathbf{S}^2\subset\mathbf{R}^3$ containing more than one point.
  Let $r_X$ be the packing radius of $X$ in $\mathbf{S}^2$, i.e., half of the minimal geodesic distance of two distinct points in $X\subset\mathbf{S}^2$, and also $R_X$ the covering radius, i.e., the minimal $R>0$ such that the (closed) $R$-neighborhood of $X$ in $\mathbf{S}^2$ covers the whole sphere $\mathbf{S}^2$.
  Let $\Gamma_\varepsilon$ be the union of all the mutually disjoint geodesic circles in $\mathbf{S}^2$ of radius $\varepsilon<r_X$ centered at the points of $X$.
  Then our condition \eqref{eq:newcondition} is satisfied by $\Gamma_\varepsilon$ for any small $\varepsilon$, since $d(\Gamma_\varepsilon)\to d(X)>0$ while $\ell(\Gamma_\varepsilon)\to0$ as $\varepsilon\to0$.
  Similarly, White's condition \eqref{eq:White} is also satisfied for any small $\varepsilon$, since if we take $\Gamma_{\varepsilon,1}$ to be one circle and $\Gamma_{\varepsilon,2}$ all the remaining circles, then $\lim_{\varepsilon\to0}\mathrm{dist}(\Gamma_{\varepsilon,1},\Gamma_{\varepsilon,2})\geq\frac{2r_X}{\pi}$.
  However, the cone condition \eqref{eq:unidirectional} may be violated depending on the choice of $X$.
  More precisely, if $X$ is sufficiently dense, say $R_X\leq\frac{1}{100}$, then $\Gamma_\varepsilon$ cannot be separated by any cone $K'$ congruent to $K$; indeed, if $K'$ separates $\Gamma_\varepsilon\subset\mathbf{S}^2$, then the center of $K'$ must lie inside $\mathbf{S}^2$, and this constraint implies a universal lower bound of the (geodesic) inradius of $\mathbf{S}^2\setminus K'$ so that in particular $R_X>\frac{1}{100}$ must hold.
\end{example}

In addition, our condition \eqref{eq:newcondition} is qualitatively independent from \eqref{eq:White}.
To observe this we shall perturb a given $\Gamma$ by adding very small circles.
For such a perturbation, condition \eqref{eq:newcondition} is generally rigid, but condition \eqref{eq:White} may be violated depending on the positions since its left-hand side may considerably decrease.

From another point of view we may say that \eqref{eq:newcondition} is in a sense ``more multi-directional'' than \eqref{eq:White}, as is seen in the next example, which is similar to Example \ref{example1} but allows more densely distributed circles.

\begin{example}[Small circles on the sphere: Increasing centers]\label{example2}
  Let $X_\varepsilon$ be an $\varepsilon$-net of $\mathbf{S}^2$ with $\varepsilon\ll1$; namely, if we let $r_{X_\varepsilon}$ (resp.\ $R_{X_\varepsilon}$) denote the packing (resp.\ covering) radius as in Example \ref{example1}, then $\varepsilon/2\leq r_{X_\varepsilon}<R_{X_\varepsilon}\leq \varepsilon$.
  Let $\Gamma_\varepsilon$ be the union of all the geodesic circles centered at the points in $X_\varepsilon$ of radius $\varepsilon^{2+\alpha}$ ($\ll\varepsilon$) for $\alpha\in(0,1)$.
  Note that the cardinality of $X_\varepsilon$ is of the form $|X_\varepsilon|\sim\varepsilon^{-2}$, and hence $\ell(\Gamma_\varepsilon)\sim\varepsilon^{2+\alpha}\cdot\varepsilon^{-2}=\varepsilon^{\alpha}$.
  Then \eqref{eq:newcondition} is still satisfied since $d(\Gamma_\varepsilon)\to2$ while $\ell(\Gamma_\varepsilon)\to0$ as $\varepsilon\to0$.
  However, \eqref{eq:White} is violated since $\mathrm{dist}(\Gamma_{\varepsilon,1},\Gamma_{\varepsilon,2})\lesssim\varepsilon$ holds for an arbitrary choice of decomposition $\Gamma_{\varepsilon,1}$ and $\Gamma_{\varepsilon,2}$, so that $\mathrm{dist}(\Gamma_{\varepsilon,1},\Gamma_{\varepsilon,2})/\ell(\Gamma_\varepsilon)\lesssim\varepsilon^{1-\alpha}\to0$.
\end{example}

We finally remark that the above multi-directional criteria \eqref{eq:White} and \eqref{eq:newcondition} require smallness of $\ell(\Gamma)$, although unidirectional results such as \eqref{eq:unidirectional} need not require any smallness of first order.
To find a (qualitative) common roof, it would be a natural direction to explore a ``zeroth order'' multi-directional criterion.
For example:

{\em Given an arbitrary finite set $X\subset\mathbf{R}^3$ containing more than one point, can one find $\varepsilon=\varepsilon(X)>0$ such that if the boundary $\partial M$ of a connected compact minimal surface $M$ is contained in the $\varepsilon$-neighborhood $U_\varepsilon(X)$ of $X$, then there is a connected component of $U_\varepsilon(X)$ that does not intersect with $\partial M$?}

Notice that such an $\varepsilon$ clearly exists if $X$ is separated by the (open) cone $K$ in \eqref{eq:unidirectional} up to a rigid motion, since so is $U_\varepsilon(X)$ for any small $\varepsilon$ in that case.
Notice also that such an inclusion property always holds if $X$ has a sufficiently small number of points.
However, it seems still open for $X$ being more uniformly distributed.
A next step would be to ask which quantity of $X$ the choice of $\varepsilon$ depends on.

\appendix

\section{Lower bound of Topping's constant}\label{sec:appendix}

In this appendix we observe that how Brendle's result improves Topping's lower bound $C_T(n)\geq\pi/32$ for low codimensions $n\leq5$; namely,
\begin{align}\label{eq:ToppingconstantLBimproved}
  C_T(n) \geq
  \begin{cases}
    \pi/16 & (n=3,4),\\
    \pi/24 & (n=5),\\
    \pi/32 & (n\geq6).
  \end{cases}
\end{align}

To this end we shall first recall the Michael-Simon Sobolev inequality, which is a key ingredient in Topping's argument.
Throughout this section we focus on two-dimensional closed surfaces $M\hookrightarrow\mathbf{R}^n$ for our purpose.
For such a surface, it is known (due to L.\ Simon, cf.\ \cite[Lemma 2.1]{Topping2008}) that
\begin{equation}\label{eq:MichaelSimon}
  \sigma\Big(\int_M|f|^2\Big)^\frac{1}{2} \leq \int_M|\nabla f|+\int_M|2Hf|
\end{equation}
holds for every (smooth $f$ and hence also) $f\in W^{1,1}(M)$, where
\begin{equation}\label{eq:MichaelSimonconstant1}
  \sigma=\sqrt{2\pi}.
\end{equation}
On the other hand, Brendle's recent result \cite[Theorem 1]{Brendle2021} implies (in the special case of $\dim{M}=2$ and $\partial M=\emptyset$) that inequality \eqref{eq:MichaelSimon} holds with the constant
\begin{equation}\label{eq:MichaelSimonconstant2}
  \sigma=\sigma(n)=\sqrt{\frac{8\pi}{n-2}} \qquad \mbox{if}\ n\geq4,
\end{equation}
which is larger than \eqref{eq:MichaelSimonconstant1} up to codimension three ($n\leq5$), and in fact sharp up to codimension two ($n=4$).
Notice that for $n=3$ we can also take $\sigma(4)$ in \eqref{eq:MichaelSimon} thanks to the canonical inclusion $M\hookrightarrow\mathbf{R}^3\hookrightarrow\mathbf{R}^4$.
Here we remark two minor differences between \eqref{eq:MichaelSimon} and \cite[Theorem 1]{Brendle2021}; there appears the prefactor $2$ of $Hf$ only in \eqref{eq:MichaelSimon} but this just comes from our convention to average $H$; also, the right-hand side of \eqref{eq:MichaelSimon} is different from the one $\int_M\sqrt{|\nabla f|^2+|2Hf|^2}$ given in \cite[Theorem 1]{Brendle2021} but it does not matter since $\sqrt{a^2+b^2}\leq|a|+|b|$.

Now we recall Topping's proof of a key lemma \cite[Lemma 1.2]{Topping2008} based on \eqref{eq:MichaelSimon} with \eqref{eq:MichaelSimonconstant1}, in order to clarify how the improved constant \eqref{eq:MichaelSimonconstant2} comes into play.

\begin{lemma}\label{lem:collapsedness}
  Let $M$ be a closed surface in $\mathbf{R}^n$, and let $p\in M$ and $R>0$.
  Let
  \begin{align*}
    m(p,R) := \sup_{r\in(0,R]}\frac{1}{r}\int_{B(p,r)}|H|, \qquad \kappa(p,R) := \inf_{r\in(0,R]}\frac{1}{r^2}V(p,r),
  \end{align*}
  where $B(p,r)$ denotes the intrinsic ball in $M$ of radius $r$ centered at $p$, and $V(p,r)$ denotes the $2$-dimensional volume of $B(p,r)\subset M$.
  Then we have
  \begin{equation*}
    \max\{m(p,R),\kappa(p,R)\}\geq\delta:=\frac{\sigma^2}{16},
  \end{equation*}
  where $\sigma=\sigma(n)\in[\sqrt{2\pi},2\sqrt{\pi}]$ is a constant for which \eqref{eq:MichaelSimon} holds.

  In particular, we can take $\delta=\pi/8$ for all $n\geq3$ due to \eqref{eq:MichaelSimonconstant1}, while $\delta=\pi/4$ (resp.\ $\delta=\pi/6$) is also allowable for $n=3,4$ (resp.\ $n=5$) due to \eqref{eq:MichaelSimonconstant2}.
\end{lemma}

\begin{proof}
  Suppose that $m(p,R)\leq\delta$.
  Then for any $r\in(0,R]$,
  \begin{equation}\label{eq:A1}
    \int_{B(p,r)}|H|\leq \delta r.
  \end{equation}
  Given $\mu>0$, we define a Lipschitz cut-off function $f$ on $M$ by $f\equiv1$ inside $B(p,r)$, $f\equiv0$ outside $B(p,r+\mu)$, and $f:=1-\frac{1}{\mu}(\mathrm{dist}_M(\cdot,p)-r)$ in the annulus $B(p,r+\mu)\setminus B(p,r)$.
  Applying \eqref{eq:MichaelSimon} to this $f$, and dropping $p$ in $V(p,r)$ for simplicity, we have
  \begin{equation*}
    \sigma V(r)^\frac{1}{2} \leq \sigma \|f\|_{L^{2}(M)} \leq \frac{1}{\mu}(V(r+\mu)-V(\mu)) + 2\int_{B(p,r+\mu)}|H|.
  \end{equation*}
  Recall that $V$ is locally Lipschitz, where the Lipschitz constant is bounded in terms of the Ricci curvature.
  Hence, letting $\mu\downarrow0$, we deduce that for a.e.\ $r$,
  \begin{equation}\label{eq:A4}
    \sigma V^\frac{1}{2} \leq \frac{dV}{dr} + 2\int_{B(p,r)}|H|.
  \end{equation}
  Combing this with \eqref{eq:A1}, we have
  \begin{equation}\label{eq:A2}
    V' + 2\delta r - \sigma V^\frac{1}{2} \geq 0.
  \end{equation}
  We now compare this $V$ with the function $v(r):=\delta r^2$, which satisfies that
  \begin{equation}\label{eq:A3}
    v' + 2\delta r -\sigma v^\frac{1}{2} = (4\delta^\frac{1}{2} - \sigma)\delta^\frac{1}{2} r = 0.
  \end{equation}
  Using \eqref{eq:A2}, \eqref{eq:A3}, and the fact that $V(r)/r^2\to\pi$ as $r\downarrow0$, where $\pi$ is the volume of the Euclidean unit $2$-ball, while $v(r)/r^2=\delta=\sigma^2/16\leq\pi/4<\pi$, we conclude that $V(r)>v(r)$ for all $r\in(0,R]$ and hence $\kappa(p,R) = \inf_{r\in(0,R]}V(x,r)/r^2>\delta.$
\end{proof}

Once we get Lemma \ref{lem:collapsedness}, by the completely same covering argument as in \cite[Section 3]{Topping2008} we deduce that
$$d_\mathrm{int}(M) \leq \frac{4}{\delta}\int_M|H|.$$
Then \eqref{eq:ToppingconstantLBimproved} immediately follows thanks to the allowable choices of $\delta$ in Lemma \ref{lem:collapsedness} (and $d\leq d_\mathrm{int}$).

\begin{remark}\label{rem:appendix}
  A core of Topping's argument is obtaining a uniformly positive lower bound for $U:=V/r^2$ by using the structure of differential inequality \eqref{eq:A2}.
  As long as we follow this strategy, the best possible choice of $\delta$ would be at most $\pi/2$ (yielding $C_T\geq\pi/8$).
  To roughly observe this, we first recall that $\sigma=2\sqrt{\pi}$ is sharp in \eqref{eq:MichaelSimonconstant1}, as is seen from \eqref{eq:A4} in the locally flat case.
  On the other hand, it seems not yet clear whether the term $\int_M|2Hf|$ is also sharp or not.
  One might expect that $\int_M|2Hf|$ could be halved (as is observed in \cite{Dalphin2016}) but any smaller one is not allowed due to the case of $M$ being a round sphere and $f\equiv1$.
  Here we temporally suppose that this expectation would be also true; then, inequality \eqref{eq:A2} would turn to the stronger form $V' + \delta r - 2\sqrt{\pi} V^\frac{1}{2} \geq 0$, or equivalently $rU'+2(\sqrt{U}-\sqrt{\pi/4})^2\geq\pi/2-\delta$.
  However, for $\delta>\pi/2$, this constraint does not rule out the possibility that $U$ approaches to zero.
\end{remark}

\bibliography{bibliography}
\bibliographystyle{amsplain}

\end{document}